\newtheorem{theorem}{Theorem}
\newtheorem{lemma}{Lemma}
\newtheorem{openproblem}{Open Problem}
\title{Enumeration of Random Walk Positions in $L_1$-norm ball in $\mathbb{Z}^d$}
\author{Luchen Shi, Will McCance, Hongjie Zeng}
\date{September 2022}
\begin{document}

\maketitle

\begin{abstract}
    In this paper, we mainly concerned about deriving the general formula to count the possible positions of $n$ step random walk in $\mathbb{Z}^d$ with unit length in each step, which we denoted as $|P_n^{d}|$. For our results, we firstly propose a recurrence relation of the counting formula: $|P_n^{d+1}| = |P_n^d| + 2\sum_{k=0}^{n-1} |P_k^d|$. Next, we propose two methods in deriving the explicit formula of $|P_n^{d}|$ using generating functions and Faulhaber's formula. Finally, we reached our main theorem in the matrix representation of our formula. 
\end{abstract}

\section{Preliminaries and Notations}
In the Euclidean space $\mathbb{R}^d$, we define the $L_p$-norm for point $x=(x_1,x_2\cdots,x_d)$ to be
\begin{equation*}
    ||x||_{p}=(|x_1|^p+|x_2|^p+\cdots+|x_d|^p)^{\frac{1}{p}}
\end{equation*}
and an open ball around the origin with radius $r$ is given by the set
\begin{equation*}
    B^p(r)=\{x\in\mathbb{R}^n: ||x||_{p}=(|x_1|^p+|x_2|^p+\cdots+|x_d|^p)^{\frac{1}{p}} \le r\}
\end{equation*}
When $d=2$, "balls" corresponding to $L_1$-norm (often called the taxicab or Manhattan metric) are bounded by squares with their diagonals parallel to the coordinate axes; When $d=3$, "balls" are octahedra with axes-aligned body diagonals corresponding to $L_1$-norm. We will use $\mathbb{Z}^d$ to denote the $d$-dimensional integer grid, i.e., the set of $d$-tuples $x=(x^1,\cdot\cdot\cdot,x^d)$ where $x^i$ are all integers. Each $x$ in $\mathbb{Z}^d$ has $2d$ "nearest neighbors", points that are at a distance one away from it. To do simple random walk in $\mathbb{Z}^d$, the walker starts at the origin and at each integer time $n$ moves to one of the nearest neighbors, each with probability $\frac{1}{2d}$. Let $S_n=(S^1_{n},\cdot\cdot\cdot,S^d_{n})$ be the position of the walker after $n$ steps, which is inside the $L_1$-norm ball $B^1_{n}$ with radius $n$. Then $$S_n=X_1+\cdots+X_n$$
where $X_i=(X^1_{i},\cdots,X^d_{i})$ and the $X_i$ are independent random vectors with $\mathbb{P}\{X_i=y\}=(2d)^{-1}$ for each $y\in\mathbb{Z}^d $ that is distance one from the origin. 
Define $W_n^d$ to be the set containing the sequences of steps in all possible random walks with $n$ steps:\\
\begin{equation*}
W_n^d  = \{X_i=(X^1_{i},\cdots,X^d_{i})| \, X_i \in \{+1,-1\}^d\,\,\,(i=1,\ldots,n)\}
\end{equation*}
We have $|W_n^d|=(2d)^n$, since each of the $n$ steps has $2d$ possibilities and are independent. Let us call a random walk of $n$ steps an “$n$-walk." From $W_n^d$, we can construct the set $P_n^d$ containing all possible positions in $\mathbb{Z}^d$ after an $n$-walk by summing each unit-length step:\\
\begin{equation*} 
P_n^d = \mathlarger{\mathlarger{\{}}\sum_{i=1}^n X_i | X_i=(X^1_{i},\cdots,X^d_{i}), X^k_{i}\in \{+1,-1\} \mathlarger{\mathlarger{\}}}
\end{equation*}
After these preparations, we are ready to derive the recursive formula for $|P_n^d|$. 

\section{Recursive Formula for $|P_n^d|$} 
We first investigate the problem of finding a formula for $|P_n^d|$. Let us consider the one-dimensional case first. In this case, random variable $X$ takes value $y$ in the set $Y = \{+1,-1\}$. For the illustrative purpose, we may represent the $\mathbb{Z}$ random walk in $\mathbb{Z}^2$ as shown in Figure 1, to show the possible positions after certain numbers of steps:
\begin{center}
\begin{tikzpicture}
\draw (-6,-6) -- (0,0)  -- (6,-6);
\draw (-5,-5) -- (-4,-6) -- (1,-1);
\draw (-4,-4) -- (-2,-6) -- (2,-2);
\draw (-3,-3) -- (0,-6) -- (3,-3);
\draw (-2,-2) -- (2,-6) -- (4,-4);
\draw (-1,-1) -- (4,-6) -- (5,-5);
\draw (0,0) node [anchor=south] {0};
\draw (-1/2,-1/2) node [anchor=east] {-1};
\draw (1/2,-1/2) node [anchor=west] {+1};
\draw (-1,-1) node [anchor=east] {-1};
\draw (1,-1) node [anchor=west] {1};
\draw (-3/2,-3/2) node [anchor=east] {-1};
\draw (3/2,-3/2) node [anchor=west] {+1};
\draw (-1/2,-3/2) node [anchor=east] {+1};
\draw (1/2,-3/2) node [anchor=west] {-1};
\draw (-2,-2) node [anchor=east] {-2};
\draw (0,-2) node [anchor=west] {0};
\draw (2,-2) node [anchor=west] {2};
\draw (-5/2,-5/2) node [anchor=east] {-1};
\draw (5/2,-5/2) node [anchor=west] {+1};
\draw (-3/2,-5/2) node [anchor=east] {+1};
\draw (3/2,-5/2) node [anchor=west] {-1};
\draw (-1/2,-5/2) node [anchor=east] {-1};
\draw (1/2,-5/2) node [anchor=west] {+1};
\draw (-3,-3) node [anchor=east] {-3};
\draw (-1,-3) node [anchor=west] {-1};
\draw (1,-3) node [anchor=west] {1};
\draw (3,-3) node [anchor=west] {3};
\draw (-7/2,-7/2) node [anchor=east] {-1};
\draw (7/2,-7/2) node [anchor=west] {+1};
\draw (-5/2,-7/2) node [anchor=east] {+1};
\draw (5/2,-7/2) node [anchor=west] {-1};
\draw (-3/2,-7/2) node [anchor=east] {-1};
\draw (3/2,-7/2) node [anchor=west] {+1};
\draw (-1/2,-7/2) node [anchor=east] {+1};
\draw (1/2,-7/2) node [anchor=west] {-1};
\draw (-4,-4) node [anchor=east] {-4};
\draw (-2,-4) node [anchor=west] {-2};
\draw (0,-4) node [anchor=west] {0};
\draw (2,-4) node [anchor=west] {2};
\draw (4,-4) node [anchor=west] {4};
\draw (-9/2,-9/2) node [anchor=east] {-1};
\draw (9/2,-9/2) node [anchor=west] {+1};
\draw (-7/2,-9/2) node [anchor=east] {+1};
\draw (7/2,-9/2) node [anchor=west] {-1};
\draw (-5/2,-9/2) node [anchor=east] {-1};
\draw (5/2,-9/2) node [anchor=west] {+1};
\draw (-3/2,-9/2) node [anchor=east] {+1};
\draw (3/2,-9/2) node [anchor=west] {-1};
\draw (-1/2,-9/2) node [anchor=east] {-1};
\draw (1/2,-9/2) node [anchor=west] {+1};
\draw (-5,-5) node [anchor=east] {-5};
\draw (-3,-5) node [anchor=west] {-3};
\draw (-1,-5) node [anchor=west] {-1};
\draw (1,-5) node [anchor=west] {1};
\draw (3,-5) node [anchor=west] {3};
\draw (5,-5) node [anchor=west] {5};
\draw (-11/2,-11/2) node [anchor=east] {-1};
\draw (11/2,-11/2) node [anchor=west] {+1};
\draw (-9/2,-11/2) node [anchor=east] {+1};
\draw (9/2,-11/2) node [anchor=west] {-1};
\draw (-7/2,-11/2) node [anchor=east] {-1};
\draw (7/2,-11/2) node [anchor=west] {+1};
\draw (-5/2,-11/2) node [anchor=east] {+1};
\draw (5/2,-11/2) node [anchor=west] {-1};
\draw (-3/2,-11/2) node [anchor=east] {-1};
\draw (3/2,-11/2) node [anchor=west] {+1};
\draw (-1/2,-11/2) node [anchor=east] {+1};
\draw (1/2,-11/2) node [anchor=west] {-1};
\draw (-6,-6) node [anchor=east] {-6};
\draw (-4,-6) node [anchor=west] {-4};
\draw (-2,-6) node [anchor=west] {-2};
\draw (0,-6) node [anchor=west] {0};
\draw (2,-6) node [anchor=west] {2};
\draw (4,-6) node [anchor=west] {4};
\draw (6,-6) node [anchor=west] {6};
\end{tikzpicture}\\
\ldots \,\,\, \ldots \,\,\, \ldots\\
\title{Figure 1: Representation of $\mathbb{Z}$ Random Walk in $\mathbb{Z}^2$}
\end{center}
Clearly, $|P_n^1| = n+1$. 
Now, suppose we want to see how many different positions can be reached in $\mathbb{Z}^2$ in $n$ steps. After $1$ step, there are $4$ possible positions. After $2$ steps, there are $9$ possible positions. We can see this by partitioning $\mathbb{Z}^2$ into "slices" of $\mathbb{Z}$: on the $x$-axis there are $|P_2^1|=3$ possible positions; on either $y=1$ or $y=-1$ there are $|P_1^1|=2$ possible positions; on either $y=2$ or $y=-2$ there is $|P_0^1|=1$ possible position. Therefore 
$$ |P_2^2| = |P_2^1| + 2|P_1^1| + 2|P_0^1| = 3 + 2\cdot 2 + 2\cdot 1 = 9$$
\begin{center}
\begin{tikzpicture}
\draw (-3,0) -- (3,0);
\draw (0,3) -- (0,-3);
\draw (-3,1) -- (3,1);
\draw (3,1) node [anchor=west] {$y=1$};
\draw (-3,2) -- (3,2);
\draw (3,2) node [anchor=west] {$y=2$};
\draw (-3,-1) -- (3,-1);
\draw (3,-1) node [anchor=west] {$y=-1$};
\draw (-3,-2) -- (3,-2);
\draw (3,-2) node [anchor=west] {$y=-2$};
\draw (3,0) node [anchor=west] {$x(y=0)$};
\draw (0,3) node [anchor=east] {$y$};
\filldraw (-2,0) [black] circle (2pt);
\draw (-2,0) node [anchor=north] {$(-2,0)$};
\filldraw (0,0) [black] circle (2pt);
\draw (-0.5,0) node [anchor=north] {$(0,0)$};
\filldraw (2,0) [black] circle (2pt);
\draw (2,0) node [anchor=north] {$(2,0)$};
\filldraw (-1,1) [black] circle (2pt);
\draw (-1,1) node [anchor=north] {$(-1,1)$};
\filldraw (1,1) [black] circle (2pt);
\draw (1,1) node [anchor=north] {$(1,1)$};
\filldraw (-1,-1) [black] circle (2pt);
\draw (-1,-1) node [anchor=north] {$(-1,-1)$};
\filldraw (1,-1) [black] circle (2pt);
\draw (1,-1) node [anchor=north] {$(1,-1)$};
\filldraw (0,2) [black] circle (2pt);
\draw (-0.5,2) node [anchor=north] {$(0,2)$};
\filldraw (0,-2) [black] circle (2pt);
\draw (-0.5,-2) node [anchor=north] {$(0,-2)$};
\draw (-3,0) node [anchor=east] {$|P_2^1|$};
\draw (-3,1) node [anchor=east] {$|P_1^1|$};
\draw (-3,2) node [anchor=east] {$|P_0^1|$};
\draw (-3,-1) node [anchor=east] {$|P_1^1|$};
\draw (-3,-2) node [anchor=east] {$|P_0^1|$};
\end{tikzpicture}\\
\title{Figure 2: Slicing Partitions of $P^2_2$}
\end{center}
This idea motivates us to generalize. Every position in $\mathbb{Z}^{d+1}$ can be written as a $(d+1)\times1$ matrix where each entry is in $\mathbb{Z}$. We consider the possible positions after $n$ steps. Since each step is a unit vector, the maximum absolute value of the last entry is $n$. Therefore, the last entry takes value in the set
$$\{-n,-(n-1),\ldots,-2,-1,0,1,2,\ldots, (n-1), n\}$$
First, consider the positions in which the last entry is $n$ or $-n$. In each case the number of possible positions is $|P_0^d|$ (since there can be no other steps in any other dimension). Next, consider the positions in which the last entry is $(n-1)$ or $-(n-1)$. In each case the number of possible positions is $|P_1^d|$ (there is one step in one of the remaining $d$ dimensions). We do this until we get to the positions in which the last entry is $1$ or $-1$. In each case the number of possible positions is $|P_{n-1}^d|$, since the other $(n-1)$ steps are in the remaining $d$ dimensions. Finally, if the last entry is $0$, the number of possible positions is $|P_n^d|$. Therefore we have the following relation:
$$ |P_n^{d+1}| = 2|P_0^d| + 2|P_1^d| + \ldots + 2|P_{n-1}^d| + |P_n^d|$$
which, if written more simply, becomes
\begin{equation}
|P_n^{d+1}| = |P_n^d| + 2\bigg(\sum_{k=0}^{n-1} |P_k^d| \bigg) = 2\bigg(\sum_{k=0}^n |P_k^d|\bigg) - |P_n^d|
\end{equation}
Using this recurrence relation, we can calculate the following
\begin{align*}
|P_n^2| & = 2\bigg(\sum_{k=0}^n |P_k^1|\bigg) - |P_n^1| \\
            & = 2\sum_{k=0}^n (k+1) - (n+1) \\
            & = 2\sum_{k=0}^n k + 2\sum_{k=0}^n 1 - (n+1) \\
            & = 2\frac{n(n+1)}{2} + 2(n+1) - (n+1) \\
            & = n(n+1) + (n+1) \\
            & = (n+1)^2
\end{align*}
We may guess that the pattern $(n+1)^d$ continues, but unfortunately this is not the case:
\begin{align*}
|P_n^3| & = 2\bigg(\sum_{k=0}^n |P_k^2|\bigg) - |P_n^2| \\
            & = 2\sum_{k=0}^n (k+1)^2 - (n+1)^2 \\
            & = 2\sum_{k=1}^{n+1} k^2 - (n+1)^2 \\
            & = 2\frac{(n+1)(n+1+1)(2(n+1)+1)}{6} - (n+1)^2 \\
            & = \frac{(n+1)(n+2)(2n+3)-3(n+1)^2}{3} \\
            & = \frac{(n+1)(2n^2 + 7n + 6 - 3(n+1))}{3} \\
            & = \frac{(n+1)(2n^2+4n+3)}{3} \\
            & = \frac{2n^3+6n^2+7n+3}{3} \\
            & = \frac{2}{3}n^3 + 2n^2 + \frac{7}{3}n + 1
\end{align*}
This suggests that $|P_n^d|$ has a more complicated form. However, one of the things we noticed is that $|P_n^d|$ seems to always be a polynomial in $n$ with degree $d$. Indeed, this can be verified by induction.
\section{$|P_n^d|$ from generating functions}
Our first approach to compute $|P^d_n|$ involves generating functions and our recursive formula $(1)$.
Our goal was to find a generating function $g^d(x)=|P^d_0|x^0+|P^d_1|x^1+|P^d_2|x^2 \cdots =\sum_{i=0}^{\infty}|P^d_i|x^i$ and to determine a formula to find $|P^d_n|$ by expanding $g^d(x)$.
First, we desire to know the formula for $g^1(x)=|P^1_0|x^0+|P^1_1|x^1+\cdots = 1+2x+3x^2+\cdots$.  We know that this is the derivative of $\frac{1}{1-x}=1+x+x^2+x^3\cdots$. Therefore, we have
\begin{equation*}
g^1(x)=\frac{1}{(1-x)^2}  
\end{equation*}
Furthermore, we have a recurrence relation that gives terms of $|P^d_n|$ in terms of $|P^{d-1}_i|$ where $i$ is a natural number ranging from 0 to $n$.  Thus, it might be natural to try to find out what $g^{d+1}(x)$ is in terms of $g^{d}(x)$. In fact, we may do just that.  

\begin{align*}
g^{d}(x)=&|P^d_0|x^0+\,\,|P^d_1|x^1+\,\,|P^d_2|x^2+\,\,|P^d_3|x^3\cdots\\
+2xg^{d}(x)=& \qquad \enspace  +2|P^d_0|x^1+2|P^d_1|x^2+2|P^d_2|x^3\cdots\\
+2x^2g^{d}(x)=& \qquad \quad \enspace \qquad \quad \,\,\,+2|P^d_0|x^2+2|P^d_1|x^3\cdots\\
\vdots&
\end{align*}
We can see that the coefficients of each expanded term up to $2x^n g^d(x)$ sums to $|P^{d+1}_n|$.  Therefore, $$g^{d+1}(x)=g^{d}(x)+\sum^{\infty}_{i=1}2x^i g^{d}(x)$$
$$g^{d+1}(x)=g^{d}(x) \big(1+2\sum^{\infty}_{i=1}x^i \big)$$
$$g^{d+1}(x)=g^{d}(x) \bigg(1+2\frac{x}{1-x} \bigg)$$
$$g^{d+1}(x)=g^{d}(x) \bigg(\frac{1+x}{1-x} \bigg)$$
Given our initial condition, we find:
\begin{equation}
g^d(x)=\frac{(1+x)^{d-1}}{(1-x)^{d+1}}
\end{equation}
Next, we must expand this function back into the form of an infinite series and compute the coefficients of each term of the function. We will examine this function in two parts: the binomial polynomial in the numerator, and the infinite series generated by the denominator. Using the binomial expansion theorem:
\begin{equation*}
(1+x)^{d-1}=\displaystyle\sum^{d-1}_{l=0}\binom{d-1}{l}x^l
\end{equation*}
We now turn to the denominator:
\begin{equation*}
    \frac{1}{(1-x)^{d+1}}
\end{equation*}
\noindent We will manipulate this into another form using our previous background in generating functions.
\begin{equation*}
    \frac{1}{(1-x)^{d+1}}=\left( \frac{1}{1-x}\right)^{d+1}=(1+x+x^2+\cdots)^{d+1}
\end{equation*}
\begin{equation*}
    =\underbrace{(1+x+x^2+\cdots)(1+x+x^2+\cdots)\dots (1+x+x^2+\cdots)}_\text{$d+1$ times}
\end{equation*}

\noindent To determine the expansion of the denominator of the function,  we must order and label the $d+1$ polynomials of the form $(1+x+x^2+\cdots)$.  We are able to determine the coefficients of this infinite series by formalizing the process we use to expand it.  We choose one term of the form $x^{e_i}$ from each of the polynomials of the form $(1+x+x^2+\cdots)$. Clearly, $e_i$ is always a nonnegative integer.  We express this polynomial as the product $\prod^{d+1}_{i=1}x^{e_i}$
for all nonnegative integers $e_i\geq 0$.  The coefficient of the $x^m$ term is the number of terms of the form $x^{e_1}x^{e_2}\cdots x^{e_{d+1}}=x^m$.  By exponent laws, we get $x^{e_1+e_2+\cdots+e_{d+1}}=x^m$.  This is equivalent to asking how many $e_1+e_2+\cdots+e_{d+1}=m$ when $\forall i, \; e_i\geq0$.  It is a well known combinatorial result that this is $\displaystyle\binom{d+m}{d}$.  Therefore, $$\displaystyle\frac{1}{(1-x)^{d+1}}=\displaystyle\sum^{\infty}_{m=0}\binom{d+m}{d}x^m$$\\

\noindent To derive our final expansion, we multiply
\begin{equation*}
   (1+x)^{d-1} \frac{1}{(1-x)^{d+1}}=\displaystyle\sum^{d-1}_{l=0}\binom{d-1}{l}x^l\displaystyle\sum^{\infty}_{m=0}\binom{d+m}{d}x^m
\end{equation*}

\noindent To determine the coefficient $|P^d_n|$ for any $n$, we must sum over all products of terms $x^l x^m=x^{l+m}=x^n$.  After calculating the expansion of each coefficient, we obtain the final expansion of $|P^d_n|$ using generating functions:
\begin{equation}
    |P^d_n|=\mathlarger{\mathlarger{\sum}}^{d-1}_{k=0}\binom{d-1}{k}\binom{d+n-k}{d}
\end{equation}

\section{$|P_n^d|$ from Faulhaber's formula}
Next, we present another method to find $|P_n^d|$. Since we know $|P_n^d|$ is a polynomial in $n$ with degree $d$, we only need the coefficients to fully determine $|P_n^d|$. We may assume
\begin{equation}
|P_n^d| = c(d,d)n^d + c(d,d-1)n^{d-1} + \ldots + c(d,1)n + c(d,0) = \mathbf{c}_d \cdot \mathbf{n}_d
\end{equation}
where
\begin{equation}
\mathbf{c}_d =
\begin{pmatrix}
c(d,d)\\c(d,d-1)\\ \vdots \\ c(d,1) \\ c(d,0)
\end{pmatrix},
\mathbf{n}_d =
\begin{pmatrix}
n^d \\ n^{d-1} \\ \vdots \\ n \\ 1
\end{pmatrix}
\end{equation}
Using this notation, we may rewrite recurrence relation $(6)$ as:
$$c(d+1,d+1)n^{d+1} + c(d+1,d)n^d + \ldots + c(d+1,1)n + c(d+1,0)$$
\begin{equation*}
    \begin{split}
& = 2 \sum_{k=0}^n \big(c(d,d)k^d + c(d,d-1)k^{d-1} + \ldots + c(d,1)k + c(d,0) \big) \\
& - \big(c(d,d)n^d + c(d,d-1)n^{d-1} + \ldots + c(d,1)n + c(d,0) \big) \\
& = 2c(d,d)\sum_{k=0}^n k^d + 2c(d,d-1)\sum_{k=0}^n k^{d-1} + \ldots + 2c(d,1)\sum_{k=0}^n k + 2c(d,0)\sum_{k=0}^n 1 \\
& - \big(c(d,d)n^d + c(d,d-1)n^{d-1} + \ldots + c(d,1)n + c(d,0) \big) 
    \end{split}
\end{equation*}
We notice that sums of the form $\displaystyle \sum_{k=0}^n k^d$ appear. The explicit expansion of this sum is given by Faulhaber's formula:
\begin{equation}
\sum_{k=0}^n k^d = 1^d + 2^d + \ldots + n^d = \frac{n^{d+1}}{d+1} + \frac{1}{2}n^d + \frac{1}{d+1}\sum_{k=2}^d B_k \binom{d+1}{k} n^{d-k+1}
\end{equation}
where $B_k$ represents the $k$th Bernoulli number. We use $(6)$ to find the relationship between $\mathbf{c}_{d+1}$ and $\mathbf{c}_d$:
\begin{equation*}
   \begin{split}
& c(d+1,d+1)n^{d+1} + c(d+1,d)n^d + \ldots + c(d+1,1)n + c(d+1,0) \\
& = 2c(d,d)\sum_{k=0}^n k^d + 2c(d,d-1)\sum_{k=0}^n k^{d-1} + \ldots + 2c(d,1)\sum_{k=0}^n k + 2c(d,0)\sum_{k=0}^n 1 \\
& - \big(c(d,d)n^d + c(d,d-1)n^{d-1} + \ldots + c(d,1)n + c(d,0)\big) \\
& = 2c(d,d) \bigg(\frac{n^{d+1}}{d+1} + \frac{1}{2}n^d + \frac{1}{d+1}\sum_{k=2}^d B_k \binom{d+1}{k} n^{d-k+1} \bigg) \\
& +2c(d,d-1) \bigg(\frac{n^d}{d} + \frac{1}{2}n^{d-1} + \frac{1}{d} \sum_{k=2}^{d-1} B_k \binom{d}{k} n^{d-k+1} \bigg) \\
& + \ldots \\
& +2c(d,1) \bigg(\frac{n^2}{2} + \frac{1}{2}n \bigg) + 2c(d,0) (n+1) \\
& -c(d,d)n^d - c(d,d-1)n^{d-1} - \ldots - c(d,1)n -c(d,0) \\
& = 2c(d,d) \bigg(\frac{n^{d+1}}{d+1} + \frac{1}{d+1} \sum_{k=2}^d B_k \binom{d+1}{k} n^{d-k+1} \bigg) \\
& + 2c(d,d-1) \bigg(\frac{n^d}{d} + \frac{1}{d} \sum_{k=2}^{d-1} B_k \binom{d}{k} n^{d-k} \bigg) \\
& + 2c(d,d-2) \bigg(\frac{n^{d-1}}{d-1} + \frac{1}{d-1} \sum_{k=2}^{d-2} B_k \binom{d-1}{k} n^{d-k-1} \bigg) \\
& + 2c(d,d-3) \bigg(\frac{n^{d-2}}{d-2} + \frac{1}{d-2} \sum_{k=2}^{d-3} B_k \binom{d-2}{k} n^{d-k-2} \bigg) \\
& + \ldots \\
& + 2c(d,4) \bigg(\frac{n^5}{5} + \frac{1}{5} \sum_{k=2}^4 B_k \binom{5}{k} n^{5-k} \bigg) \\
& + 2c(d,3) \bigg(\frac{n^4}{4} + \frac{1}{4} \sum_{k=2}^3 B_k \binom{4}{k} n^{4-k} \bigg) \\
& + 2c(d,2) \bigg(\frac{n^3}{3} + \frac{1}{3} \sum_{k=2}^2 B_k \binom{3}{k} n^{3-k} \bigg) \\
& + 2c(d,1) \bigg(\frac{n^2}{2} \bigg) + 2c(d,0) (n+1) - c(d,0)
    \end{split}
\end{equation*}
Comparing coefficients of $n^{d+1}, n^d, \ldots, n, 1$, we have the following:
\begin{align*}
c(d+1,d+1) & = \frac{2c(d,d)}{d+1} \\
c(d+1,d) & = \frac{2c(d,d-1)}{d} \\
c(d+1,d-1) & = \frac{2B_2}{d+1} \binom{d+1}{2} c(d,d) + \frac{2c(d,d-2)}{d-1} \\
c(d+1,d-2) & = \frac{2B_3}{d+1} \binom{d+1}{3} c(d,d) + \frac{2B_2}{d} \binom{d}{2} c(d,d-1) + \frac{2c(d,d-3)}{d-2} \\
& \vdots \\
c(d+1,d-j) & = \frac{2B_{j+1}}{d+1} \binom{d+1}{j+1} c(d,d) + \frac{2B_j}{d} \binom{d}{j} c(d,d-1) + \frac{2B_{j-1}}{d-1} \binom{d-1}{j-1} c(d,d-2) + \ldots \\
& +\frac{2B_2}{d-j+2} \binom{d-j+2}{2} c(d,d-j+1) + \frac{2c(d,d-j-1)}{d-j} \\
& \vdots \\
& + \frac{2B_2}{5} \binom{5}{2} c(d,4) + \frac{2c(d,2)}{3} \\
c(d+1,2) & = \frac{2B_{d-1}}{d+1} \binom{d+1}{d-1} c(d,d) + \frac{2B_{d-2}}{d} \binom{d}{d-2} c(d,d-1) + \frac{2B_{d-3}}{d-1} \binom{d-1}{d-3} c(d,d-2) + \ldots \\
& + \frac{2B_3}{5} \binom{5}{3} c(d,4) + \frac{2B_2}{4} \binom{4}{2} c(d,3) + \frac{2c(d,1)}{2} \\
c(d+1,1) & = \frac{2B_d}{d+1} \binom{d+1}{d} c(d,d) + \frac{2B_{d-1}}{d} \binom{d}{d-1} c(d,d-1) + \frac{2B_{d-2}}{d-1} \binom{d-1}{d-2} c(d,d-2) + \ldots \\
& + \frac{2B_4}{5} \binom{5}{4} c(d,4) + \frac{2B_3}{4} \binom{4}{3} c(d,3) + \frac{2B_2}{3} \binom{3}{2} c(d,2) + 2c(d,0) \\
c(d+1,0) & = c(d,0)
\end{align*}
Using matrix form we may write this more compactly:
\begin{equation}
\mathbf{c}_{d+1} = M_d^{d+1} \mathbf{c}_d
\end{equation}
where
\begin{equation*}
M_d^{d+1} = 
\begin{pmatrix}
\frac{2}{d+1} & 0 & 0 & 0 & 0 & \cdots & 0 & \cdots & 0  & 0 & 0 
\\
0 & \frac{2}{d} & 0 & 0 & 0 & \cdots & 0  & \cdots & 0 & 0 & 0 
\\
\frac{2B_2}{d+1}\binom{d+1}{2} & 0 & \frac{2}{d-1} & 0 & 0 & \cdots & 0  & \cdots & 0 & 0 & 0   
\\
\frac{2B_3}{d+1}\binom{d+1}{3} & \frac{2B_2}{d}\binom{d}{2} & 0 & \frac{2}{d-2} & 0 & \cdots & 0  & \cdots & 0 & 0 & 0                                         
\\
\frac{2B_4}{d+1}\binom{d+1}{4} & \frac{2B_3}{d}\binom{d}{3} & \frac{2B_2}{d-1}\binom{d-1}{2} & 0 & \frac{2}{d-3} & \cdots & 0 & \cdots & 0 & 0 & 0 
\\
\vdots & \vdots & \vdots & \vdots & \vdots & \ddots & \vdots & & \vdots & \vdots & \vdots 
\\
\frac{2B_{j+1}}{d+1}\binom{d+1}{j+1} & \frac{2B_j}{d}\binom{d}{j} & \frac{2B_{j-1}}{d-1}\binom{d-1}{j-1} & \cdots & \frac{2B_2}{d-j+2}\binom{d-j+2}{2} & 0 & \frac{2}{d-j} & \cdots & 0 & 0 & 0 
\\
\vdots & \vdots & \vdots & \vdots & \vdots & \vdots & \vdots & \ddots & \vdots & \vdots & \vdots 
\\
\frac{2B_{d-2}}{d+1}\binom{d+1}{d-2} & \frac{2B_{d-3}}{d}\binom{d}{d-3} & \frac{2B_{d-4}}{d-1}\binom{d-1}{d-4} & \cdots & \cdots & \cdots & \frac{2B_2}{5}\binom{5}{2} & 0 & \frac{2}{3} & 0 & 0 
\\
\frac{2B_{d-1}}{d+1}\binom{d+1}{d-1} & \frac{2B_{d-2}}{d}\binom{d}{d-2} & \frac{2B_{d-3}}{d-1}\binom{d-1}{d-3} & \cdots & \cdots & \cdots & \cdots & \frac{2B_2}{4}\binom{4}{2} & 0 & \frac{2}{2} & 0 
\\
\frac{2B_d}{d+1}\binom{d+1}{d} & \frac{2B_{d-1}}{d}\binom{d}{d-1} & \frac{2B_{d-2}}{d-1}\binom{d-1}{d-2} & \cdots & \cdots & \cdots & \frac{2B_4}{5}\binom{5}{4} & \frac{2B_3}{4}\binom{4}{3} & \frac{2B_2}{3}\binom{3}{2} & 0 & 2 
\\
0 & 0 & 0 & 0 & 0 & \cdots & 0 & \cdots & 0 & 0 & 1
\end{pmatrix}
\end{equation*}
is a $(d+2)\times (d+1)$ matrix. \\
For example, 
\begin{align*}
\mathbf{c}_1 & = 
\begin{pmatrix}
c(1,1) \\ c(1,0)
\end{pmatrix}
= 
\begin{pmatrix}
1 \\ 1
\end{pmatrix} \\
\mathbf{c}_2 & = M_1^2 \mathbf{c}_1 =
\begin{pmatrix}
1 & 0 \\
0 & 2 \\
0 & 1 \\
\end{pmatrix}
\begin{pmatrix}
1 \\ 1
\end{pmatrix} =
\begin{pmatrix}
1 \\ 2 \\ 1
\end{pmatrix} \\
\mathbf{c}_3 & = M_2^3 \mathbf{c}_2 = 
\begin{pmatrix}
\frac{2}{3} & 0 & 0 \\
0 & 1 & 0 \\
\frac{1}{3} & 0 & 2 \\
0 & 0 & 1 \\
\end{pmatrix}
\begin{pmatrix}
1 \\ 2 \\ 1
\end{pmatrix} = 
\begin{pmatrix}
\frac{2}{3} \\ 2 \\ \frac{7}{3} \\ 1
\end{pmatrix} \\
\mathbf{c}_4 & = M_3^4 \mathbf{c}_3 =
\begin{pmatrix}
\frac{1}{2} & 0 & 0 & 0 \\
0 & \frac{2}{3} & 0 & 0 \\
\frac{1}{2} & 0 & 1 & 0 \\
0 & \frac{1}{3} & 0 & 2 \\
0 & 0 & 0 & 1 \\
\end{pmatrix}
\begin{pmatrix}
\frac{2}{3} \\ 2 \\ \frac{7}{3} \\ 1
\end{pmatrix}=
\begin{pmatrix}
1/3 \\ 4/3 \\ 8/3 \\ 8/3 \\ 1
\end{pmatrix}
\end{align*}
which gives, correspondingly,
\begin{align*}
|P_n^1| & = n + 1 \\
|P_n^2| & = n^2 + 2n + 1 \\
|P_n^3| & = \frac{2}{3} n^3 + 2n^2 + \frac{7}{3} n + 1 \\
|P_n^4| & = \frac{1}{3} n^4 + \frac{4}{3} n^3 + \frac{8}{3} n^2 + \frac{8}{3} n + 1 
\end{align*}
This is summarized in the following theorem:
\begin{theorem}
The cardinality of the set of possible positions in $\mathbb{Z}^d$ after a $n$-walk, $|P_n^d|$, is given by
\begin{equation}
|P_n^d| = \mathbf{c}_d \cdot \mathbf{n}_d
\end{equation}
where $\mathbf{c}_d$ and $\mathbf{n}_d$ are as given in $(11)$, and $\mathbf{c}_d$ is calculated recursively by
\begin{equation}
\mathbf{c}_d = M_{d-1}^d M_{d-2}^{d-1} \cdots M_2^3 M_1^2 \mathbf{c}_1
\end{equation}
where
\begin{equation*}
\mathbf{c}_1 = 
\begin{pmatrix}
1 \\ 
1 \\
\end{pmatrix}
\end{equation*}
and $M_d^{d+1}$ is given above.
\end{theorem}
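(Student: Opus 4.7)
The plan is to prove Theorem 1 by induction on $d$, using the scaffolding already established in Sections 2 and 4. The key ingredients are: (a) the recurrence $|P_n^{d+1}| = 2\sum_{k=0}^n |P_k^d| - |P_n^d|$ from equation $(1)$; (b) Faulhaber's formula from equation $(6)$; and (c) the statement that $|P_n^d|$ is a polynomial in $n$ of degree $d$ (asserted at the end of Section 2). Before tackling the matrix identity, I would first prove $(c)$ as a standalone lemma, since the ansatz $|P_n^d| = \mathbf{c}_d\cdot\mathbf{n}_d$ presupposes it.

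For the polynomiality lemma, the base case $|P_n^1| = n+1$ is immediate. For the inductive step, if $|P_n^d| = \sum_{j=0}^d c(d,j) n^j$ with $c(d,d)\neq 0$, then plugging into the recurrence and applying Faulhaber's formula (under which $\sum_{k=0}^n k^j$ is a polynomial in $n$ of degree $j+1$ with leading coefficient $1/(j+1)$) shows that $|P_n^{d+1}|$ is a polynomial of degree $d+1$ with leading coefficient $2c(d,d)/(d+1)$. Positivity of this leading coefficient propagates inductively from $c(1,1)=1$, so the degree is exactly $d+1$.

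Given the lemma, the theorem becomes a matter of reading off the coefficient recurrence. I would substitute $|P_n^d| = \sum_{j=0}^d c(d,j) n^j$ into the RHS of $(1)$, expand each $\sum_{k=0}^n k^j$ via Faulhaber's formula (treating $j=0$ separately as $n+1$, and $j=1$ as $\frac{n^2}{2}+\frac{n}{2}$), collect like powers of $n$, and equate with $\sum_{i=0}^{d+1} c(d+1,i) n^i$. The key cancellation is that the second-leading Faulhaber term $\frac{1}{2}n^j$ contributes $2c(d,j)\cdot\frac{1}{2}n^j = c(d,j)n^j$, which exactly cancels the $-c(d,j) n^j$ coming from $-|P_n^d|$. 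This is precisely why no $\frac{1}{2}$ entries appear in $M_d^{d+1}$ off the diagonal structure dictated by $\frac{2}{d+1-i}$, and why the remaining off-diagonal entries involve only Bernoulli numbers $B_k$ for $k\ge 2$. The last row encodes the special case $c(d+1,0) = 2c(d,0) - c(d,0) = c(d,0)$, coming from the $+1$ in $\sum_{k=0}^n 1 = n+1$.

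The main obstacle is indexing bookkeeping: verifying that the coefficient of $n^{d+1-i}$ in the expansion picks up exactly the Bernoulli terms $B_2,B_3,\ldots,B_{i}$ with the correct binomial weights $\binom{d+2-m}{\ell}$ multiplying $c(d,d+1-m-\ell+1)$, lining up with the entries written in $M_d^{d+1}$. This is routine but error-prone; I would verify it first on the displayed rows for $c(d+1,d+1)$, $c(d+1,d)$, and $c(d+1,d-1)$, then pattern-match to write the general row indexed by $j$, and finally check the small cases $\mathbf{c}_2,\mathbf{c}_3,\mathbf{c}_4$ against the explicit computations already given in the paper. Once the entry-by-entry match is confirmed, iterating $\mathbf{c}_{d+1} = M_d^{d+1}\mathbf{c}_d$ from the base $\mathbf{c}_1=(1,1)^T$ yields equation $(9)$ of the theorem, completing the proof.
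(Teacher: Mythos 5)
Your proposal is correct and takes essentially the same route as the paper: substitute the polynomial ansatz $|P_n^d| = \mathbf{c}_d \cdot \mathbf{n}_d$ into recurrence $(1)$, expand the power sums via Faulhaber's formula, compare coefficients of each power of $n$ to obtain $\mathbf{c}_{d+1} = M_d^{d+1}\mathbf{c}_d$, and iterate from $\mathbf{c}_1$. Your only additions are details the paper leaves implicit --- the induction establishing that $|P_n^d|$ is a polynomial of degree exactly $d$, and the observation that the $\frac{1}{2}n^j$ Faulhaber terms cancel against $-|P_n^d|$ --- both of which agree with the computation actually carried out in Section 4.
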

\section{Comparing $c(d,d), c(d,d-1), c(d,d-2)$ from two approaches}
Since our two methods of finding $|P_n^d|$ both originate from the recurrence relation (1), they must yield the same result for any nonnegrative integer $d$. In fact, this was proved to be true from $d=1$ to $5$. During the process of proving this, we found that although our second method involving matrices seems to be complicated and does not provide a formula for $|P_n^d|$, when actually carrying out calculation, it is much simpler than expanding the sum obtained by our first method, as only basic arithmetic of rational numbers is involved. We try to derive explicitly from both methods the coefficient of each power of $n$ in $|P_n^d|$ and show that they are equal. Unfortunately, this turns out to be difficult for coefficient of the most general form $c(d,d-j)$ in $|P_n^d|$ and we pose this as an open problem. However, for $c(d,d),c(d,d-1)$ and $c(d,d-2)$ we are able to prove that both methods give the same result,as we now show.\\
\subsection{$c(d,d)$}
The first row of $M_d^{d+1}$ tells us
\begin{equation*}
    c(d+1,d+1) = \frac{2}{d+1} c(d,d)
\end{equation*}
which, if we develop further, yields
\begin{align*}
    c(d+1,d+1) & = \frac{2}{d+1} c(d,d) \\
               & = \frac{2}{d+1} \frac{2}{d} c(d-1,d-1) \\
               & = \cdots \\
               & = \frac{2}{d+1} \frac{2}{d} \cdots \frac{2}{2} c(1,1) \\
               & = \frac{2^d}{(d+1)!}
\end{align*}
Therefore, 
\begin{equation}
    c(d,d) = \frac{2^{d-1}}{d\,!}
\end{equation}
Expanding $(1)$ gives
\begin{equation}
    |P_n^d| = \mathlarger{\sum}_{k=0}^{d-1} \binom{d-1}{k} \frac{(n-k+d)\,!}{d\,! \, (n-k)\,!} =
    \frac{1}{d\,!} \mathlarger{\sum}_{k=0}^{d-1} \binom{d-1}{k} (n-(k-1)) (n-(k-2)) \cdots (n-(k-d))
\end{equation}
From this, and by the binomial theorem, the coefficient of $n^d$ is
\begin{equation*}
    c(d,d) = \frac{1}{d\,!} \mathlarger{\sum}_{k=0}^{d-1} \binom{d-1}{k} = \frac{2^{d-1}}{d\,!}
\end{equation*}
which agrees with $(10)$.
\subsection{$c(d,d-1)$}
The second row of $M_d^{d+1}$ tells us
\begin{equation*}
    c(d+1,d) = \frac{2}{d} c(d,d-1)
\end{equation*}
which we may develop as before:
\begin{align*}
    c(d+1,d) & = \frac{2}{d} c(d,d-1) \\
             & = \frac{2}{d} \frac{2}{d-1} c(d-1,d-2) \\
             & = \cdots \\
             & = \frac{2}{d} \frac{2}{d-1} \cdots \frac{2}{2} \frac{2}{1} c(1,0) \\
             & = \frac{2^d}{d\,!}
\end{align*}
and thus
\begin{equation}
    c(d,d-1) = \frac{2^{d-1}}{(d-1)\,!}
\end{equation}
Now we look at $(19)$ again and try to determine the coefficient of $n^{d-1}$. To contribute to $c(d,d-1)n^{d-1}$, we may take $-(k-i)$ out of the $i$th factor and $n$ out of the other $(d-1)$ factors; since $i$ ranges from $1$ to $d$, there are $d$ ways to do this, and we can write 
\begin{align*}
    c(d,d-1) & = \frac{1}{d\,!} \mathlarger{\sum}_{k=0}^{d-1} \binom{d-1}{k} \mathlarger{\sum}_{i=1}^d(-(k-i)) \\
             & = \frac{1}{d\,!} \mathlarger{\sum}_{k=0}^{d-1} \binom{d-1}{k}\bigg( \frac{d(d+1)}{2} - kd \bigg) \\
             & = \frac{d(d+1)2^{d-1}}{2d\,!} - \frac{1}{(d-1)\,!} \mathlarger{\sum}_{k=0}^{d-1} k\binom{d-1}{k} \\
             & = \frac{(d+1)2^{d-2}}{(d-1)\,!} - \frac{2^{d-2}}{(d-2)\,!} \\
             & = \frac{2^{d-2}}{(d-2)\,!} \bigg(\frac{d+1}{d-1}-1 \bigg) \\
             & = \frac{2^{d-1}}{(d-1)\,!}
\end{align*}
(In the process we used the identity $k\displaystyle\binom{n}{k} = n\displaystyle\binom{n-1}{k-1}$)\\
which agrees with $(12)$.
\subsection{$c(d,d-2)$}
The third row of $M_d^{d+1}$ tells us
\begin{equation*}
    c(d+1,d-1) = \frac{2B_2}{d+1}\binom{d+1}{2}c(d,d) + \frac{2}{d-1}c(d,d-2)
\end{equation*}
which we may develop further to obtain
\begin{align*}
    c(d+1,d-1) & = \frac{d}{6}c(d,d) + \frac{2}{d-1}c(d,d-2) \\
               & = \frac{2^{d-1}}{6(d-1)\,!} + \frac{2}{d-1}c(d,d-2) \\
               & = \cdots \\
               & = (d-1)\frac{2^{d-1}}{6(d-1)\,!} + \frac{2^{d-1}}{(d-1)(d-2)\cdots(d-(d-1))}c(2,0) \\
               & = \frac{2^{d-1}}{6(d-2)\,!} + \frac{2^{d-1}}{(d-1)\,!} \\
               & = \frac{2^{d-1}}{6(d-2)\,!}\bigg(1+\frac{6}{d-1}\bigg) \\
               & = \frac{2^{d-1}(d+5)}{6(d-1)\,!}
\end{align*}
and thus
\begin{equation}
    c(d,d-2) = \frac{2^{d-2}(d+4)}{6(d-2)\,!}
\end{equation}
Before we turn to finding $c(d,d-2)$ from $(11)$, we first prove three lemmas which are going to be used.
\begin{lemma}
$\displaystyle\sum_{k=1}^n k\binom{n}{k} = n2^{n-1}$ and $\displaystyle\sum_{k=1}^n k^2\binom{n}{k} = n(n+1)2^{n-2}$
\end{lemma}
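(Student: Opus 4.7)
The plan is to prove both identities by reducing each sum to a closed-form expression for a row sum of Pascal's triangle, using the absorption identity $k\binom{n}{k}=n\binom{n-1}{k-1}$ that was already invoked in the proof of $c(d,d-1)$ above.

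For the first identity, I would apply the absorption identity directly: writing $k\binom{n}{k}=n\binom{n-1}{k-1}$ and reindexing with $j=k-1$ gives $\sum_{k=1}^{n}k\binom{n}{k}=n\sum_{j=0}^{n-1}\binom{n-1}{j}=n\cdot 2^{n-1}$, where the last equality is the standard row-sum identity $\sum_{j=0}^{m}\binom{m}{j}=2^{m}$.

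For the second identity, the natural move is to split $k^{2}=k(k-1)+k$ so that the falling-factorial piece can be handled by applying the absorption identity twice: $k(k-1)\binom{n}{k}=n(n-1)\binom{n-2}{k-2}$. Then
\begin{align*}
\sum_{k=1}^{n}k^{2}\binom{n}{k} &= \sum_{k=2}^{n}k(k-1)\binom{n}{k}+\sum_{k=1}^{n}k\binom{n}{k}\\
&= n(n-1)\sum_{k=2}^{n}\binom{n-2}{k-2}+n\cdot 2^{n-1}\\
&= n(n-1)\cdot 2^{n-2}+2n\cdot 2^{n-2}\\
&= n(n+1)\cdot 2^{n-2},
\end{align*}
where the first piece uses the row-sum identity again after reindexing, and the second piece uses part one.

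There is no real obstacle here; both identities are classical and the only choice is between this combinatorial/absorption approach and the analytic alternative of differentiating $(1+x)^{n}$ once or twice and evaluating at $x=1$. I would prefer the absorption approach because it keeps the proof entirely within the binomial-coefficient manipulations already being used in this section of the paper, making the lemma flow directly into the subsequent computation of $c(d,d-2)$.
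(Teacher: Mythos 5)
Your proof is correct, but it takes a genuinely different route from the paper. The paper proves both identities analytically: it differentiates the binomial expansion $(1+x)^n=\sum_{k=0}^n\binom{n}{k}x^k$ and sets $x=1$ to get the first identity, then multiplies the differentiated series by $x$, differentiates again, and sets $x=1$ to get the second. You instead stay entirely within finite binomial-coefficient algebra: absorption $k\binom{n}{k}=n\binom{n-1}{k-1}$ plus the row-sum identity for the first sum, and the falling-factorial split $k^2=k(k-1)+k$ with double absorption $k(k-1)\binom{n}{k}=n(n-1)\binom{n-2}{k-2}$ for the second. Both arguments are complete and the small boundary cases in yours are handled properly (the $k=1$ term of the falling-factorial sum vanishes, and for $n=1$ the factor $n(n-1)=0$ makes the empty sum harmless). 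The trade-off: the paper's differentiation method extends mechanically to arbitrary moments $\sum_k k^m\binom{n}{k}$ by iterating the multiply-by-$x$-and-differentiate step, and it echoes the generating-function machinery of Section 3; your approach avoids calculus altogether, reuses the absorption identity the paper already invokes in the $c(d,d-1)$ computation (so the section reads more uniformly), and the split $k^2=k(k-1)+k$ is the $m=2$ case of the Stirling-number decomposition of powers into falling factorials, which gives a comparably systematic route to higher moments if one ever needed them.
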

\begin{proof}
We begin with a special case of the Binomial Theorem:
\begin{equation}
    (1+x)^n = \sum_{k=0}^n \binom{n}{k}x^k
\end{equation}
Differentiating both sides of $(14)$ with respect to $x$, we get
\begin{equation}
    n(1+x)^{n-1} = \sum_{k=1}^n k\binom{n}{k}x^{k-1}
\end{equation}
Substituting $x=1$ in $(15)$, we get
\begin{equation}
    \sum_{k=1}^n k\binom{n}{k} = n2^{n-1}
\end{equation}
which is the first identity. Multiplying both sides of $(15)$ by $x$, we get
\begin{equation}
    nx(1+x)^{n-1} = \sum_{k=1}^n k\binom{n}{k}x^k
\end{equation}
Differentiating both sides of $(17)$ with respect to $x$, we get
\begin{equation}
    n\big((1+x)^{n-1} + (n-1)x(1+x)^{n-2}\big) = \sum_{k=1}^n k^2\binom{n}{k}x^{k-1}
\end{equation}
Substituting $x=1$ in $(18)$ finally gives
\begin{equation}
    \sum_{k=1}^n k^2\binom{n}{k} = n\big(2^{n-1} + (n-1)2^{n-2}\big) = n(n+1)2^{n-2}
\end{equation}
\end{proof}
\begin{lemma}
$\displaystyle\sum_{1\leq i<j \leq n}(i+j) = \frac{n(n+1)(n-1)}{2}$
\end{lemma}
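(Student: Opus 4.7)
The cleanest approach is a counting/symmetry argument: for each index $k \in \{1,2,\ldots,n\}$, I would count how many times $k$ contributes to the double sum. The integer $k$ appears as the smaller element $i$ in exactly $n-k$ ordered pairs (paired with $j = k+1, \ldots, n$) and as the larger element $j$ in exactly $k-1$ ordered pairs (paired with $i = 1, \ldots, k-1$). Adding these gives $n-1$ occurrences of $k$, independent of $k$.

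From this the main computation collapses immediately:
\begin{equation*}
\sum_{1\leq i<j\leq n}(i+j) = \sum_{k=1}^{n} (n-1)\,k = (n-1)\sum_{k=1}^{n} k = (n-1)\cdot\frac{n(n+1)}{2} = \frac{n(n+1)(n-1)}{2}.
\end{equation*}

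An equivalent route, if one prefers to avoid the combinatorial bookkeeping, is to extend the sum over all ordered pairs and then symmetrize. Observe that $\sum_{i=1}^{n}\sum_{j=1}^{n}(i+j) = 2n\sum_{k=1}^{n}k = n^2(n+1)$, while the diagonal $i=j$ contributes $\sum_{i=1}^{n}2i = n(n+1)$. Subtracting gives $\sum_{i\neq j}(i+j) = n(n+1)(n-1)$, and by the symmetry $(i,j)\leftrightarrow(j,i)$ the unordered sum is exactly half of this, yielding the same answer.

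I expect no real obstacle here; the lemma is purely algebraic and both methods above are short and elementary. The only thing to be mildly careful about is the bookkeeping in the counting argument (making sure each ordered pair is counted exactly once and that $k=1$ and $k=n$ are handled correctly, which they are since $n-k$ and $k-1$ are allowed to be zero). I would present the symmetry-counting proof as the main argument for brevity.
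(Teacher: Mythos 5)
Your proof is correct. Note that the paper states this lemma without any proof at all (only Lemma 1 receives a proof in the text), so there is no argument of the paper's to compare against; your submission actually fills an omission. Of your two routes, the counting argument is the cleaner one: each $k\in\{1,\ldots,n\}$ occurs as the smaller index in $n-k$ pairs and as the larger index in $k-1$ pairs, hence exactly $n-1$ times in total, giving $(n-1)\sum_{k=1}^{n}k=\frac{n(n+1)(n-1)}{2}$ immediately, and your handling of the boundary cases $k=1$ and $k=n$ is sound since the counts $k-1$ and $n-k$ are simply zero there. The symmetrization route (sum over all ordered pairs, subtract the diagonal, halve) is also verified correct: $n^2(n+1)-n(n+1)=n(n+1)(n-1)$, halved as claimed. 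Either argument would serve the paper well; the same template (symmetrize over ordered pairs, remove the diagonal) also extends naturally to the paper's unproved Lemma 3 on $\sum_{1\le i<j\le n}ij$, where one gets $\frac{1}{2}\bigl(\bigl(\sum k\bigr)^2-\sum k^2\bigr)$.
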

\begin{lemma}
$\displaystyle \sum_{1\leq i<j \leq n} ij = \frac{n(n+1)(3n+2)(n-1)}{24}$
\end{lemma}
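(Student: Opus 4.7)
The plan is to reduce the double sum to two well-known single sums via a standard squaring identity, then simplify. I would start from the observation
\begin{equation*}
\Big(\sum_{i=1}^n i\Big)^2 = \sum_{i=1}^n i^2 + 2\sum_{1 \le i < j \le n} ij,
\end{equation*}
which is the usual expansion of the square of a sum, separating the diagonal terms $i = j$ from the off-diagonal ones and exploiting the symmetry $ij = ji$ to write the off-diagonal contribution as twice the strict-order sum.

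Rearranging, I would obtain
\begin{equation*}
\sum_{1 \le i < j \le n} ij = \frac{1}{2}\Bigg[\Big(\sum_{i=1}^n i\Big)^2 - \sum_{i=1}^n i^2\Bigg] = \frac{1}{2}\Bigg[\frac{n^2(n+1)^2}{4} - \frac{n(n+1)(2n+1)}{6}\Bigg],
\end{equation*}
after plugging in the classical closed forms (which are also special cases of Faulhaber's formula $(6)$ used earlier in the paper).

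From here it is a matter of bookkeeping. I would pull out a factor of $n(n+1)/24$, combine the remaining bracket over the common denominator $12$, and be left with the cubic polynomial $3n^2 + 3n - 4n - 2 = 3n^2 - n - 2$. The key algebraic step, and the only thing that could plausibly trip one up, is recognizing this quadratic factors as $(3n+2)(n-1)$; checking $(3n+2)(n-1) = 3n^2 - n - 2$ confirms it. Assembling the pieces gives
\begin{equation*}
\sum_{1 \le i < j \le n} ij = \frac{n(n+1)(3n+2)(n-1)}{24},
\end{equation*}
as claimed. There is no real obstacle: once the squaring identity is invoked, the remainder is elementary polynomial arithmetic, and the factorization $3n^2 - n - 2 = (3n+2)(n-1)$ is the only non-mechanical observation needed.
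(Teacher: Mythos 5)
Your proof is correct, and every step checks: the squaring identity $\bigl(\sum_{i=1}^n i\bigr)^2 = \sum_{i=1}^n i^2 + 2\sum_{1\le i<j\le n} ij$, the closed forms for $\sum i$ and $\sum i^2$, and the factorization $3n^2 - n - 2 = (3n+2)(n-1)$ all hold, yielding exactly the stated right-hand side. The paper states this lemma without any proof at all, so there is nothing to compare against; your argument, which is the standard elementary-symmetric-polynomial reduction to power sums, supplies the omitted justification and is entirely consistent with the paper's toolkit (the same power sums appear in its use of Faulhaber's formula).
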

We are now ready to proceed to finding $c(d,d-2)$ from $(11)$. 
\begin{align*}
    c(d,d-2) 
          & = \frac{1}{d\,!} \mathlarger{\sum}_{k=0}^{d-1} \binom{d-1}{k} (-1)^2 \mathlarger{\sum}_{1\leq i<j \leq d} (k-i)(k-j) \\
          & = \frac{1}{d\,!} \mathlarger{\sum}_{k=0}^{d-1} \binom{d-1}{k} \mathlarger{\sum}_{1\leq i<j \leq d} \mathlarger{(} k^2 - (i+j)k + ij \mathlarger{)} \\
          & = \frac{1}{d\,!} \mathlarger{\sum}_{k=0}^{d-1} \binom{d-1}{k} \bigg( \binom{d}{2}k^2 - k\cdot \frac{d(d+1)(d-1)}{2} + \frac{d(d+1)(3d+2)(d-1)}{24} \bigg) \\
          & = \frac{1}{d\,!} \bigg( \binom{d}{2}\mathlarger{\sum}_{k=0}^{d-1} k^2\binom{d-1}{k} - \frac{d(d+1)(d-1)}{2} \mathlarger{\sum}_{k=0}^{d-1} k\binom{d-1}{k} + \frac{d(d+1)(3d+2)(d-1)}{24} \mathlarger{\sum}_{k=0}^{d-1} \binom{d-1}{k} \bigg) \\
          & = \frac{2^{d-3}}{6(d-2)\,!} (2d+8) = \frac{2^{d-2}(d+4)}{6(d-2)\,!}
\end{align*}
which agrees with $(13)$.
\subsection{What's next?}
Using similar procedure, from the matrix $M_d^{d+1}$ we can show that
\begin{align}
    c(d,d-3) & = \frac{2^{d-2}d}{(d-3)\,!} \\
    c(d,d-4) & = \frac{2^{d-6}(5d^2+33d-32)}{45(d-4)\,!}
\end{align}
However, if we expand $(19)$, we get the following:
\begin{align}
    c(d,d-3) & = \frac{1}{d\,!} \mathlarger{\sum}_{k=0}^{d-1} \binom{d-1}{k} (-1)^3 \mathlarger{\sum}_{1\leq i_1<i_2<i_3 \leq d} (k-i_1)(k-i_2)(k-i_3) \\
    c(d,d-4) & = \frac{1}{d\,!} \mathlarger{\sum}_{k=0}^{d-1} \binom{d-1}{k}(-1)^4 \mathlarger{\sum}_{1\leq i_1<i_2<i_3<i_4 \leq d} (k-i_1)(k-i_2)(k-i_3)(k-i_4) 
\end{align}
Of course, we have reason to believe that $(22)$ ends up being $(20)$, and $(23)$ ends up being $(21)$. However, a major difficulty appears, which is to evaluate the sums of the form
\begin{equation}
    \mathlarger{\sum}_{1\leq i_1<i_2<\cdots<i_j \leq d} (k-i_1)(k-i_2)\cdots(k-i_j)
\end{equation}
which appears in the expression for $c(d,d-j)$ obtained from $(11)$:
\begin{equation}
    c(d,d-j) = \frac{1}{d\,!}\mathlarger{\sum}_{k=0}^{d-1} \binom{d-1}{k} (-1)^j \mathlarger{\sum}_{1\leq i_1<i_2<\cdots<i_j \leq d} (k-i_1)(k-i_2)\cdots(k-i_j)
\end{equation}
One possible way to simplify the sum $(24)$ is by the substitution
\begin{equation*}
    i_1' = k-i_j, i_2' = k-i_{j-1}, \cdots, i_j' = k-i_1
\end{equation*}
So that 
\begin{equation*}
    \mathlarger{\sum}_{1\leq i_1<i_2<\cdots<i_j \leq d} (k-i_1)(k-i_2)\cdots(k-i_j) =
    \mathlarger{\sum}_{k-d\leq i_1'<i_2'<\cdots<i_j' \leq k-1} i_1'i_2'\cdots i_j'
\end{equation*}
This requires us to do sums of the form
\begin{equation*}
    \mathlarger{\sum}_{a\leq i_1<i_2<\cdots<i_j \leq b} i_1i_2\cdots i_j \,\,\,\,\,(a,b \in \mathbb{Z},a<b)
\end{equation*}
Of course, we have
\begin{align*}
    \mathlarger{\sum}_{a\leq i_1<i_2<\cdots<i_j \leq b} i_1i_2\cdots i_j 
    & = 
    a\mathlarger{\sum}_{a+1 \leq i_2<\cdots<i_j \leq b} i_2\cdots i_j \\
    & + 
    (a+1)\mathlarger{\sum}_{a+2 \leq i_2<\cdots<i_j \leq b} i_2\cdots i_j \\
    & +
    \cdots + (b-j)\mathlarger{\sum}_{b-j+1 \leq i_2<\cdots<i_j \leq b} i_2\cdots i_j \\
    & + (b-j+1)\mathlarger{\sum}_{b-j+2 \leq i_2<\cdots<i_j \leq b} i_2\cdots i_j  \\
    & =
    \mathlarger{\sum}_{s=a}^{b-j+1}s\mathlarger{\sum}_{s+1 \leq i_2<\cdots<i_j\leq b} i_2\cdots i_j
\end{align*}
which gives a way to recursively evaluate the sum from the previous sum of the same form. However, when carried out in practice, this turns out to be increasingly complicated as $j$ gets larger. An interesting phenomenon that appears is that Faulhaber's formula is also involved in such calculations, which might be related to the fact that the formula is also an indispensable part in obtaining the matrix $M_d^{d+1}$.\\
On the other hand, when we try to find more coefficients from the matrix $M_d^{d+1}$, the main difficulty comes from two aspects: Firstly, we need information on coefficients $c(d,d),\cdots c(d,d-j+2)$ in order to find $c(d,d-j)$; Secondly, since there is no closed-form formula for the Bernoulli numbers, as they are involved in calculations, it is difficult to spot the pattern in which the coefficients evolve. We currently do not have a good way to overcome these difficulties and consequently propose the following open problem. 
\begin{openproblem}
Derive expressions for $c(d,d-j)$ from both the matrix $M_d^{d+1}$ and $(11)$ and prove that they are indeed equal.
\end{openproblem}


\begin{thebibliography}{9}
\bibitem{texbook}
Donald E. Knuth (1993). "Johann Faulhaber and sums of powers". Mathematics of Computation. 61 (203): 277–294. arXiv:math.CA/9207222
\bibitem{textbook}
Introductory Combinatorics (5th Edition), Richard A.Brualdi, Pearson
\bibitem{textbook}
Lectures on Contemporary Probability, Gregory F.Lawler, Lester N.Coyle, American Mathematical Society, Institute for Advanced Study
\bibitem{textbook}
Lectures on Contemporary Probability, Gregory F.Lawler, Lester N.Coyle, American Mathematical Society, Institute for Advanced Study
\bibitem{textbook}
Bourbaki, Nicolas (1987) [1981]. Topological Vector Spaces: Chapters 1–5. Éléments de mathématique. Translated by Eggleston, H.G.; Madan, S. Berlin New York: Springer-Verlag. ISBN 3-540-13627-4. OCLC 17499190.
\bibitem{textbook}
Khaleelulla, S. M. (1982). Counterexamples in Topological Vector Spaces. Lecture Notes in Mathematics. Vol. 936. Berlin, Heidelberg, New York: Springer-Verlag. ISBN 978-3-540-11565-6. OCLC 8588370.
\bibitem{textbook}
Narici, Lawrence; Beckenstein, Edward (2011). Topological Vector Spaces. Pure and applied mathematics (Second ed.). Boca Raton, FL: CRC Press. ISBN 978-1584888666. OCLC 144216834.
\bibitem{textbook}
Schaefer, Helmut H.; Wolff, Manfred P. (1999). Topological Vector Spaces. GTM. Vol. 8 (Second ed.). New York, NY: Springer New York Imprint Springer. ISBN 978-1-4612-7155-0. OCLC 840278135.
\bibitem{textbook}
Trèves, François (2006) [1967]. Topological Vector Spaces, Distributions and Kernels. Mineola, N.Y.: Dover Publications. ISBN 978-0-486-45352-1. OCLC 853623322.
\bibitem{textbook}
Wilansky, Albert (2013). Modern Methods in Topological Vector Spaces. Mineola, New York: Dover Publications, Inc. ISBN 978-0-486-49353-4. OCLC 849801114.
\end{thebibliography}
\end{document}